\theoremstyle{plain}
\newtheorem{theorem}{Theorem}%[section]
\theoremstyle{remark}
\newtheorem{example}[theorem]{Example}
\theoremstyle{plain}
\newtheorem{corollary}[theorem]{Corollary}
\newtheorem{lemma}[theorem]{Lemma}
\newtheorem{proposition}[theorem]{Proposition}
\def\R{{\mathbb R}}
\renewcommand{\P}{{\mathbb P}}
\newcommand{\Om}{\Omega}
\renewcommand{\d}{\delta}
\newcommand{\e}{\varepsilon}
\renewcommand{\o}{\omega}
\renewcommand{\O}{\Omega}
\newcommand{\beq}{\begin{equation}}
\newcommand{\eeq}{\end{equation}}
\newcommand{\bal}{\begin{aligned}}
\newcommand{\eal}{\end{aligned}}
\newcommand{\ben}{\begin{enumerate}}
\newcommand{\een}{\end{enumerate}}
\newcommand{\bit}{\begin{itemize}}
\newcommand{\eit}{\end{itemize}}
\newcommand{\bth}{\begin{theorem}}
\renewcommand{\eth}{\end{theorem}}
\newcommand{\bpr}{\begin{proposition}}
\newcommand{\epr}{\end{proposition}}
\newcommand{\ble}{\begin{lemma}}
\newcommand{\ele}{\end{lemma}}
\newcommand{\bpf}{\begin{proof}}
\newcommand{\epf}{\end{proof}}
\newcommand{\bex}{\begin{example}}
\newcommand{\eex}{\end{example}}
\newcommand{\bre}{\begin{example}}
\newcommand{\ere}{\end{example}}
\newcommand {\idual}[3] {\langle #1, #2 \rangle_{#3}}
\newcommand {\dual}[2] {\idual{#1}{#2}{} }
\newcommand{\D}{{\mathcal D}}
\newcommand{\n}{\Vert}
\newcommand{\one}{{{\bf 1}}}
\newcommand{\embed}{\hookrightarrow}
\newcommand{\s}{^*}
\newcommand{\lb}{\langle}
\newcommand{\rb}{\rangle}
\newcommand{\limn}{\lim_{n\to\infty}}
\newcommand{\limk}{\lim_{k\to\infty}}
\newcommand{\limj}{\lim_{j\to\infty}}
\newcommand{\sumn}{\sum_{n=1}^\infty}
\newcommand{\sumk}{\sum_{k=1}^\infty}
\begin{document}

\title[Compactness in vector-valued Banach function spaces]
{Compactness in vector-valued \\ Banach function spaces}
\author{Jan van Neerven}

\address{Delft Institute of Applied Mathematics\\
Technical University of Delft \\ P.O. Box 5031\\ 2600 GA Delft\\The
Netherlands}

\email{J.vanNeerven@math.tudelft.nl}

\thanks{The author is supported by the `VIDI subsidie' 639.032.201
in the `Vernieuwingsimpuls' programme of the Netherlands
Organization for Scientific Research (NWO) and
by the Research Training Network HPRN-CT-2002-00281.}

\keywords{Compactness, vector-valued Banach function spaces, order continuous
norm, almost order boundedness, uniform
integrability, tightness}

\subjclass[2000]{Primary: 46E40, Secondary: 46E30, 46B50, 47D06, 60B05}

\begin{abstract}
We give a new proof of a recent characterization by Diaz and
Mayoral of compactness in the Lebesgue-Bochner spaces $L_X^p$, 
where $X$ is a Banach space and $1\le p<\infty$,
and extend the result to vector-valued Banach function spaces
$E_X$, where $E$ is a Banach function space with order continuous norm.
\end{abstract}

\maketitle

Let 
%$(S,\mu)$ be a $\sigma$-finite measure space and 
$X$ be a Banach space.
The problem of describing the compact sets in the Lebesgue-Bochner spaces $L_X^p$,
$1\le p<\infty$, goes back to the work of Riesz, Fr\'echet,  Vitali 
in the scalar-valued case, cf. \cite{DS}, and 
has been considered by many authors, cf. \cite{Au,Br,DM,RS,Si}. 
%For example, characterizations of compact sets in $L_X^p$ for $S=(0,1)$ in 
%terms of uniform bounds on difference quotients 
%were obtained by Aubin \cite{Au}, 
%Rossi-Savar\'e \cite{RS}, and  Simon \cite{Si}, while for probability spaces 
%$(S,\mu)$ a characterization of compact sets in $L_X^p$ 
%terms of conditional expectations was obtained by 
%Brooks and Dinculeanu \cite{Br}.
In a recent paper, Diaz and Mayoral \cite{DM} 
proved that if the underlying  measure space is finite, then
a subset $K$ of $L_X^p$ is relatively compact 
if and only if $K$ is uniformly $p$-integrable, 
scalarly relatively compact, and either uniformly tight or 
flatly concentrated. Their proof relies on the 
Diestel-Ruess-Schachermayer characterization \cite{DRS} of weak compactness
in $L_X^1$ and the notion of Bocce oscillation, 
which was studied recently by Girardi \cite{G} and 
Balder-Girardi-Jalby \cite{BGJ} in the context of compactness in $L_X^1$. 
The purpose of this note is to present 
an extension of the Diaz-Mayoral result to 
vector-valued Banach function spaces $E_X$,
with a proof based on Prohorov's tightness theorem.

\medskip
We begin with some preliminaries on
Banach lattices and Banach function spaces. Our terminology is standard and follows \cite{MN}.

A Banach lattice $E$ is said to have {\em order continuous norm} if
every net in $E$ which decreases to $0$ converges to $0$. 
Every separable Banach function space $E$ has this property. Indeed, because 
such spaces are Dedekind complete \cite[Lemma 2.6.1]{MN}
and cannot contain an isomorphic copy of $l^\infty$, this follows from 
\cite[Corollary 2.4.3]{MN}. 

A subset $F$  of a Banach lattice 
$E$ is called {\em almost order bounded} if for every $\e>0$ there exists an element $x_\e\in E_+$ such that
$F\subseteq [-x_\e,x_\e]+ B(\e)$, where 
$[-x_\e,x_\e] := \{y\in E: \ -x_\e\le y\le x_\e\}$ and $B(\e) :=\{x\in X: \ \n
x\n < \e\}$. It follows from  \cite[Theorem 2.4.2]{MN} that
every almost order bounded set in a Banach lattice with order continuous norm
is relatively weakly compact. 

\begin{lemma}\label{lem:bdd-interval} Let $E$ be a Banach lattice and let $I$ be a dense ideal in $E$. 
If the set $A\subseteq E^+$ is almost order bounded, then for every $\e>0$ there exists an element
$x_\e\in I^+$ such that $A\subseteq [0,x_\e] + B(\e)$.
\end{lemma}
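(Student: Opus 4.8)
The plan is to reduce the statement to a truncation argument inside the order interval furnished by the almost order bound, and then to approximate by an element of the dense ideal.

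First I would use almost order boundedness of $A$ to fix, for the given $\e>0$, an element $y\in E^+$ with $A\subseteq[-y,y]+B(\e/2)$, and then remove the ``negative part'' with the help of $A\subseteq E^+$: if $a\in A$ is written as $a=z+b$ with $-y\le z\le y$ and $\n b\n<\e/2$, then $a\wedge y\in[0,y]$, while $z\le y$ forces $a-y=z+b-y\le b$, so that $a-a\wedge y=(a-y)^+\le b^+\le|b|$. Hence $A\subseteq[0,y]+B(\e/2)$, and it remains to produce $x_\e\in I^+$ with $[0,y]\subseteq[0,x_\e]+B(\e/2)$.

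For this second step I would invoke norm density of $I$ to choose $w\in I$ with $\n y-w\n<\e/2$ and set $x_\e:=w^+\wedge y$. Since $|w^+|\le|w|$ and $I$ is an ideal we get $w^+\in I$, and then $0\le w^+\wedge y\le w^+$ gives $x_\e\in I^+$; moreover $x_\e\le y$ by construction. Using the Riesz identity $u-u\wedge v=(u-v)^+$ one checks, for $a\in[0,y]$, that $a\wedge x_\e\in[0,x_\e]$ and that $a-a\wedge x_\e=(a-x_\e)^+\le(y-x_\e)^+=y-x_\e=(y-w^+)^+\le(y-w)^+\le|y-w|$, whence $\n a-a\wedge x_\e\n<\e/2$. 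Combining the two reductions yields $A\subseteq[0,y]+B(\e/2)\subseteq[0,x_\e]+B(\e)$, which is the assertion.

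The argument consists entirely of elementary Riesz-space manipulations and presents no serious obstacle; the one point that needs a little care is that the norm approximant $w$ of $y$ must be cut down into the interval $[0,y]$ without leaving the ideal $I$ and without losing control of the error — which is exactly what the element $w^+\wedge y$ accomplishes, the solidity of $I$ keeping it in $I$ and the monotonicity $(y-w^+)^+\le(y-w)^+$ together with the identity above controlling the norm.
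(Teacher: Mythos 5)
Your proof is correct and follows essentially the same route as the paper's: both rest on the truncation identity $a = a\wedge x_\e + (a-x_\e)^+$ together with an approximation of the order bound from below by an element of $I$. The only difference is organizational — you split the argument into two reductions and spell out explicitly (via $w^+\wedge y$ and the estimate $(y-w^+)^+\le(y-w)^+$) how to choose $x_\e\in I$ with $0\le x_\e\le y$ and $\n y-x_\e\n$ small, a step the paper simply asserts as a consequence of the density of the ideal.
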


\begin{proof} Fix $\e>0$ and choose $y_\e\in E^+$ such that
$A\subseteq [-y_\e,y_\e] + B(\frac12\e)$. 
Choose $x_\e\in I$
such that $0\le x_\e\le y_\e$ and $\n y_\e -x_\e\n <\frac12\e$. 

Fix $a\in A$, say $a = y+b$ with $y\in [-y_\e,y_\e]$ and $\n b\n<\frac12\e$.  
With $z_\e: = y_\e +|b|$ we have $\n z_\e -x_\e\n \le \n y_\e -x_\e\n+ \n b\n < \e$.
From $a\le z_\e$ we infer
$(a-x_\e)^+ \le (z_\e-x_\e)^+ = z_\e - x_\e$ and hence
$ \n (a-x_\e)^+\n \le \n z_\e-x_\e\n < \e$. It follows that
$a= a \wedge x_\e + (a - x_\e)^+ \in [0,x_\e] + B(\e)$.
\end{proof}

If $E$ is a Banach function space with order continuous norm,
then for all $f\in E$ we have
$
\lim_{r\to\infty} \n \one_{\{|\phi| > r\}} \phi\n_{E} = 0.
$
%where $\{\n{\phi}\n > r\}$ is short-hand for $\{s\in S:\ \n\phi(s)\n > r\}$.
Motivated by this we shall call a subset $F$ of $E$ {\em uniformly $E$--integrable} if 
$$
\lim_{r\to\infty} \sup_{\phi\in F} \big\n \one_{\{|\phi|> r\}} {\phi}\big\n_{E} 
=0.
$$
For $E=L^p$ with $1\le p<\infty$, this definition reduces to the 
classical definition of uniform $p$-integrability.

If $E$ is a Banach function space containing the constant function $\one$,
then every uniformly $E$--integrable subset of $E$ is almost order bounded.
From Lemma \ref{lem:bdd-interval} we deduce the following converse:

\begin{lemma}\label{lem:ui}
Let $E$ be a Banach function space with order continuous norm
over a $\sigma$-finite measure space $(S,\nu)$. If  $F\subseteq E^+$
is almost order bounded, then $F$ is uniformly $E$-integrable.
\end{lemma}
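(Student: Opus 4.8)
The plan is to invoke Lemma~\ref{lem:bdd-interval} with the dense ideal $I$ consisting of the bounded elements of $E$, i.e.\ those $f\in E$ with $\n f\n_\infty<\infty$ (when $\nu$ is $\sigma$-finite one may moreover insist that $f$ have support of finite measure, but this will not be needed). That $I$ is dense follows from order continuity of the norm: for $f\in E^+$ one has $f\wedge n\uparrow f$, hence $\n f-f\wedge n\n_E\to0$, while $f\wedge n\le f$ lies in $I$. So, fixing $\e>0$, Lemma~\ref{lem:bdd-interval} yields $x_\e\in I^+$ with $F\subseteq[0,x_\e]+B(\e)$; put $M_\e:=\n x_\e\n_\infty<\infty$.

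The key step is to trap the tail sets. Given $\phi\in F$, write $\phi=g+b$ with $0\le g\le x_\e$ and $\n b\n_E<\e$. Then $0\le\phi\le x_\e+|b|\le M_\e+|b|$ pointwise, so on $\{\phi>r\}$ we have $|b|>r-M_\e$; hence for every $r>2M_\e$,
$$
\{\phi>r\}\subseteq\{|b|>r/2\},
$$
and therefore
$$
\one_{\{\phi>r\}}\phi\le\one_{\{|b|>r/2\}}(x_\e+|b|)\le\one_{\{|b|>r/2\}}x_\e+|b|.
$$

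It remains to estimate the two summands in $E$. The second contributes at most $\n b\n_E<\e$. For the first, the pointwise bounds $\one_{\{|b|>r/2\}}\le\frac2r|b|$ and $x_\e\le M_\e$ give $\one_{\{|b|>r/2\}}x_\e\le\frac{2M_\e}{r}|b|$, so $\n\one_{\{|b|>r/2\}}x_\e\n_E\le\frac{2M_\e}{r}\n b\n_E<\frac{2M_\e\e}{r}$. Combining, $\sup_{\phi\in F}\n\one_{\{\phi>r\}}\phi\n_E\le\e+\frac{2M_\e\e}{r}$ for all $r>2M_\e$, and letting $r\to\infty$ gives $\limsup_{r\to\infty}\sup_{\phi\in F}\n\one_{\{\phi>r\}}\phi\n_E\le\e$. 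Since $\e>0$ was arbitrary, $F$ is uniformly $E$-integrable. The one point requiring care is the choice of ideal in Lemma~\ref{lem:bdd-interval}: boundedness of $x_\e$ is exactly what allows the tail set $\{\phi>r\}$ to be absorbed into $\{|b|>r/2\}$ and the $E$-norm bound on $b$ to be converted into a bound on $\one_{\{\phi>r\}}x_\e$; the rest is routine manipulation of pointwise lattice inequalities.
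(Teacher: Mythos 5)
Your proof is correct and follows essentially the same route as the paper: both apply Lemma~\ref{lem:bdd-interval} with the dense ideal $I=E\cap L^\infty(S,\nu)$, write $\phi=g+b$ with $0\le g\le x_\e$ and $\n b\n_E<\e$, and control the tail with the Chebyshev-type bound $\one_{\{|b|>r/2\}}x_\e\le\tfrac{2M_\e}{r}|b|$. The only cosmetic difference is that you absorb the entire set $\{\phi>r\}$ into $\{|b|>r/2\}$ once $r>2M_\e$, whereas the paper splits $\{\phi>r\}\subseteq\{x>\tfrac12 r\}\cup\{|b|>\tfrac12 r\}$ and observes that the first piece contributes nothing for large $r$ because $x\le x_\e\le R_\e$.
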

\begin{proof}
Let $\e>0$ be fixed. By Lemma \ref{lem:bdd-interval}, applied to 
$I:= E\cap L^\infty(S,\nu)$, we may choose $x_\e \in E^+$ and real numbers $R_\e\ge 0$ 
such that $0\le x_\e \le R_\e$ $\nu$-almost everywhere and
$F \subseteq [0,x_\e]+ B(\e)$. Keeping $\phi\in F$ fixed for the moment, 
 we can write $\phi = x+b$ with $x\in [0,x_\e]$ 
and $\n b\n_E<\e$.
Then, for all $r>0$,
$$ 
\bal
\big\n \one_{\{\phi > r\}}\phi \big\n_{E}
& \le \big\n \one_{\{\phi>r\}} x \big\n_E 
+ \big\n \one_{\{\phi>r\}} b\big\n_E
\\ & \le \big\n \one_{\{x>\tfrac12 r\}} x \big\n_E + \big\n
\one_{\{|b|>\tfrac12 r\}} x\big\n_E +  \n  b\n_E
\\ & \le \big\n \one_{\{x_\e>\tfrac12 r\}} x_\e \big\n_E
 + \frac{2R_\e}{r} \big\n b \big\n_E + \e,
\eal
$$
where in the last step we used that $\nu$-almost everywhere we have
$$ 0\le \tfrac12 r \one_{\{|b|>\tfrac12 r\}} x\le |b|x \le |b|x_\e \le R_\e
|b|.$$ 
The lemma immediately follows from this. 
\end{proof}

The next lemma gives a
sufficient condition for norm convergence in almost order bounded sets.
Recall that an element $x\s\in E\s$ in the dual of a Banach lattice $E$ is
called {\em stricly positive} if $\lb |x|,x\s\rb = 0$ implies $x=0$.

\begin{lemma}\label{lem:alm-ord-bdd}
Let $E$ be a Banach lattice with order continuous norm and 
let $F$ be an almost order bounded subset of $E$. If $(x_{j})_{j\ge 1}$
is a sequence in $F$ such that 
$ \lim_{j\to\infty} \lb |x_{j}|, x\s\rb = 0$
for some strictly positive element $x\s\in E\s$, then
$\lim_{j\to\infty} x_{j} = 0$ in $E$.
\end{lemma}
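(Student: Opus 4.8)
The plan is to reduce the assertion to a statement about a sequence contained in one fixed order interval, and then to run a monotone argument powered by Dedekind completeness and order continuity of the norm. Two standard facts will be used freely: a Banach lattice with order continuous norm is Dedekind complete \cite{MN}, and every functional in its dual is order continuous (if $a_\alpha\downarrow 0$ then $\|a_\alpha\|\to0$, hence $\langle a_\alpha,x\s\rangle\to0$ for every $x\s\in E\s$).

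First I would reduce to the positive case. The set $\{|x|:x\in F\}$ is again almost order bounded: if $F\subseteq[-u,u]+B(\e)$ and $x=y+b$ with $y\in[-u,u]$ and $\|b\|<\e$, then $|x|\le u+|b|$, so $|x|\in[0,u]+B(\e)\subseteq[-u,u]+B(\e)$. Since $\langle|x_j|,x\s\rangle\to0$ by hypothesis and $\|\,|x_j|\,\|=\|x_j\|$, I may assume $x_j\ge0$ for all $j$ (and then $\langle x_j,x\s\rangle\to0$). Now fix $\e>0$. By Lemma~\ref{lem:bdd-interval}, applied with $I=E$ to the almost order bounded set $\{x_j:j\ge1\}\subseteq E^+$, there is $u_\e\in E^+$ with $\{x_j\}\subseteq[0,u_\e]+B(\e)$. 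Writing $x_j=y_j+b_j$ with $y_j\in[0,u_\e]$ and $\|b_j\|<\e$, we have $x_j-u_\e\le b_j$, hence $(x_j-u_\e)^+\le|b_j|$; thus the truncations $z_j:=x_j\wedge u_\e$ satisfy $0\le z_j\le u_\e$, $\|x_j-z_j\|=\|(x_j-u_\e)^+\|<\e$, and $\langle z_j,x\s\rangle\le\langle x_j,x\s\rangle\to0$. So everything comes down to the following intermediate statement: if $0\le z_j\le u$ for a fixed $u\in E^+$ and $\langle z_j,x\s\rangle\to0$, then $\|z_j\|\to0$. Granting it, $\limsup_j\|x_j\|\le\limsup_j(\|z_j\|+\|x_j-z_j\|)\le\e$, and letting $\e\downarrow0$ finishes the proof.

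To prove this intermediate statement I would first note that it suffices to show every subsequence of $(z_j)$ has a further subsequence converging to $0$ in norm, so after relabelling I may assume $\langle z_j,x\s\rangle\le 2^{-j}$. Since $E$ is Dedekind complete and $z_j\le u$, the tail suprema $w_m:=\bigvee_{k\ge m}z_k$ exist in $E$, and $w_m\downarrow w$ for some $w\in E^+$; order continuity of the norm gives $\|w_m-w\|\to0$. On the other hand $\bigvee_{k=m}^{n}z_k\le\sum_{k=m}^{n}z_k$, so applying $x\s$ and letting $n\to\infty$ (using that $x\s$ is order continuous, hence respects the increasing suprema $\bigvee_{k=m}^{n}z_k\uparrow w_m$) yields $\langle w_m,x\s\rangle\le\sum_{k\ge m}\langle z_k,x\s\rangle\le 2^{1-m}$. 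Hence $0\le\langle w,x\s\rangle\le\langle w_m,x\s\rangle\le 2^{1-m}$ for all $m$, so $\langle w,x\s\rangle=0$; since $w\ge0$ and $x\s$ is strictly positive, $w=0$. Therefore $\|w_m\|=\|w_m-w\|\to0$, and from $0\le z_m\le w_m$ we conclude $\|z_m\|\to0$.

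The main obstacle is really just this intermediate statement, and within it the realisation that the naive route — extracting a weak limit point of $(z_j)$ in the weakly compact order interval $[0,u]$ and identifying it as $0$ via strict positivity — does not by itself yield norm convergence. The device above circumvents it: one sparsifies so that $\sum_k\langle z_k,x\s\rangle<\infty$, replaces the sequence by the decreasing sequence of tail suprema $w_m$, and then lets order continuity of the norm do the work; the key point is the inequality $\langle w_m,x\s\rangle\le\sum_{k\ge m}\langle z_k,x\s\rangle$, which forces $(w_m)$ down to $0$. Everything else — preservation of almost order boundedness under $|\cdot|$, the truncation estimates, and the subsequence-of-a-subsequence reduction — is routine bookkeeping.
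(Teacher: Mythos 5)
Your proof is correct, but it takes a genuinely different route from the paper's. The paper argues by contradiction: assuming $\n x_{j_n}\n\ge\d$ along a subsequence, it observes that $\lb |x_{j_m}-x_{j_n}|,x\s\rb\to0$ and then invokes a lemma of R\"abiger (\cite[Lemma 3.8]{Rab}) to conclude that $(x_{j_n})$ is norm convergent to some $x$ with $\n x\n\ge\d$ and $\lb|x|,x\s\rb=0$, contradicting strict positivity; the real analytic content is thus outsourced to the cited lemma. You instead give a self-contained argument: reduce to a positive sequence, truncate into a fixed order interval $[0,u_\e]$ at the cost of $\e$ using Lemma \ref{lem:bdd-interval}, and then handle the order-bounded case by sparsifying so that $\sum_k\lb z_k,x\s\rb<\infty$, forming the decreasing tail suprema $w_m=\bigvee_{k\ge m}z_k$ (which exist by Dedekind completeness), and using $\lb w_m,x\s\rb\le\sum_{k\ge m}\lb z_k,x\s\rb$ together with strict positivity to force $w_m\downarrow0$, whence order continuity of the norm gives $\n z_m\n\le\n w_m\n\to0$. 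All the steps check out (the subsequence-of-subsequence reduction, the inequality $\bigvee_{k=m}^n z_k\le\sum_{k=m}^n z_k$ for positive elements, and the order continuity of $x\s$ all hold; like the paper, you implicitly use that a strictly positive functional is in particular positive). What your approach buys is independence from the external reference and a transparent mechanism for why norm convergence — not just weak convergence — results; what the paper's approach buys is brevity, since R\"abiger's lemma packages the Cauchy argument in one citation.
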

\begin{proof}
Assume the contrary and choose sequences $j_n\to\infty$
and a number $\d>0$ such that $\n x_{j_n}\n_E \ge \d$ for all $n$.
We have
$$ \lim_{m,n\to\infty} \lb  |x_{j_m} - x_{j_n}|, x\s\rb
\le  \lim_{m\to\infty}\lb  |x_{j_m}|,x\s\rb  + \lim_{n\to\infty}\lb  |x_{j_n}|,x\s\rb =0$$
and therefore, by \cite[Lemma 3.8]{Rab}, $\lim_{n\to\infty} x_{j_n} =x$ for some $x\in E$.
Then $\n x\n \ge \d$ and 
$ 0= \lim_{n\to\infty} \lb |x_{j_n}|,x\s\rb = \lb |x|,x\s\rb.$
This contradicts the fact that $x\s$ is strictly positive. 
\end{proof}

Let  $X$ be a Banach space. 
A set $M$ of Radon probability measures 
on $X$ is called {\em uniformly tight}  if for every $\e>0$ there exists
a compact set $K$ in $X$ such that $$\mu(K)\ge 1-\e \quad\forall \mu\in M.$$ 
By Prohorov's theorem for Radon measures \cite[Theorem I.3.6]{VTC}, $M$ is uniformly tight if and only if $M$ relatively weakly compact, i.e., 
every sequence $(\mu_n)_{n \ge 1}$ has a subsequence $(\mu_{n_k})_{k \ge 1}$ such that for some Radon probability measure $\mu$ we have
$$ \limk \int_X f\,d\mu_{n_k} = \int_X f\,d\mu \ \ \hbox{for all $f\in C_b(X)$},$$
where $C_b(X)$ is the space of all scalar-valued bounded continuous functions on $X$.

\medskip
We shall formulate the main result of this paper 
for Banach function spaces $E$ over a probability space $(\O,\P)$. 
This is done merely for convenience; 
the result extends to arbitrary finite measure spaces by a trivial normalization
argument.

The space $E_X$ of all strongly $\P$-measurable
functions $\phi: \O\to X$ such that $\o\mapsto \n \phi(\o)\n$ belongs to $E$
is a Banach space with respect to the norm
$$%\beq\label{ocn} 
\n \phi\n_{E_X} := \big\n \,\n \phi \n\,\big\n_E.
$$
Here, as usual, we identify functions that are equal $\P$-almost everywhere.
It follows from \cite[Proposition 2.6.3]{MN} that $\limn \phi_n = \phi$ in $E_X$ implies that for some subsequence
we have $\limk \phi_{n_k}(\o) = \phi(\o)$ in $X$ for $\P$-almost all $\o\in \O$.

The {\em distribution} of a function $\phi\in E_X$
 is the Radon probability measure $\mu_{\phi}$ on $X$ defined by
$$ \mu_\phi(B) = \P\{\phi\in B\}  
\ \ \hbox{for $B\subseteq X$ Borel.}$$
This definition is independent of the representative of $\phi$ 
used to define $\mu_\phi$. 

%The set $F$ is called {\em flatly concentrated} if $M_F$ is flatly
%concentrated, i.e. if for
%every $\e>0$ there exists a finite-dimensional subspace $Y$ of $X$
%such that $$\P\{d(\phi,Y) <  \e\}  \ge 1- \e.$$
%For $x\s\in X\s$ we write
%$\lb F,x\s\rb = \{\lb \phi, x\s\rb: \ \phi\in F\}.$

%A subset $F$ of $E_X$ will be called {\em almost order bounded}
%if for all $\e>0$ there exists an $x_\e\in E^+$ such that
%for all $\phi\in F$ we have $\phi = \phi_1+\phi_2$ with 
%$\n \phi_1\n \in [0,x_\e]$ and $\n \phi_2\n_{E_X} < \e$.

We call a subset $F$ of $E_X$:
\bit
\item {\em almost order bounded}, if $\{\n \phi\n: \ \phi\in F\}$ is almost order bounded in $E$;
\item {\em scalarly relatively compact}, if $\{\lb \phi,x\s\rb: \ \phi\in F\}$ is relatively norm compact in $E$ for all $x\s\in E\s$;
\item {\em uniformly tight}, if $\{\mu_\phi: \ \phi\in F\}$
is uniformly tight.
\eit

\begin{lemma}\label{lem:obdd}
 Let $F$ be a subset of $E_X$.
If $F$ is almost order bounded,
then also $F-F$ is almost order bounded.
\end{lemma}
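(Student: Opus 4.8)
The plan is to reduce the statement to the pointwise triangle inequality in $X$. Since for $\phi,\psi\in E_X$ and $\P$-almost every $\o$ we have $\n\phi(\o)-\psi(\o)\n\le\n\phi(\o)\n+\n\psi(\o)\n$, the function $\n\phi-\psi\n$ is dominated in $E$ by $\n\phi\n+\n\psi\n$. So it suffices to control the latter using the hypothesis that $\{\n\phi\n:\phi\in F\}$ is almost order bounded in $E$, and then to re-package the resulting bound in the precise form demanded by the definition of almost order boundedness.

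First I would fix $\e>0$ and apply Lemma \ref{lem:bdd-interval} to the almost order bounded set $\{\n\phi\n:\phi\in F\}\subseteq E^+$ (taking for the dense ideal $I$ the space $E$ itself) to obtain $x\in E^+$ with $\{\n\phi\n:\phi\in F\}\subseteq[0,x]+B(\tfrac12\e)$. For $\phi,\psi\in F$, write $\n\phi\n=u+a$ and $\n\psi\n=v+b$ with $u,v\in[0,x]$ and $\n a\n_E,\n b\n_E<\tfrac12\e$. Putting $w:=u+v$ and $c:=|a|+|b|$, one has $0\le w\le 2x$, $\n c\n_E<\e$, and, since $a\le|a|$ and $b\le|b|$,
$$0\le \n\phi-\psi\n\le \n\phi\n+\n\psi\n\le w+c.$$

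The last step is to turn the domination $0\le g\le w+c$, where $g:=\n\phi-\psi\n$, into an honest decomposition $g=w'+c'$ with $w'\in[0,2x]$ and $\n c'\n_E<\e$. For this I would set $w':=g\wedge 2x$ and $c':=(g-2x)^+$, so that $g=w'+c'$ and $0\le w'\le 2x$; moreover $w\le 2x$ forces $g-2x\le c$, hence $0\le c'=(g-2x)^+\le c$ and $\n c'\n_E\le\n c\n_E<\e$. Thus $\n\phi-\psi\n\in[0,2x]+B(\e)\subseteq[-2x,2x]+B(\e)$ for all $\phi,\psi\in F$, which is exactly the assertion that $F-F$ is almost order bounded. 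The computation is routine; the only point calling for a moment's thought is this final re-packaging, which is needed because the error terms $a,b$ are not a priori positive, so one must first pass to $|a|+|b|$ and then truncate at $2x$ to return to the prescribed form.
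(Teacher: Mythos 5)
Your proof is correct, and it takes a genuinely different route from the paper's. The paper decomposes each vector-valued function $\phi\in F$ itself as $\phi=f+g$ with $\n f\n\in[0,x_\e]$ and $\n g\n_{E_X}<\tfrac12\e$, via the truncation $\one_{\{\n\phi\n\le x_\e\}}\phi+\one_{\{\n\phi\n>x_\e\}}\tfrac{x_\e}{\n\phi\n}\phi$, and then controls $\n\phi_1-\phi_2\n$ through the reverse triangle inequality $\bigl|\,\n\phi_1-\phi_2\n-\n f_1-f_2\n\,\bigr|\le\n g_1-g_2\n$. You never touch the $X$-valued functions at all: the only input from $E_X$ is the pointwise triangle inequality $\n\phi-\psi\n\le\n\phi\n+\n\psi\n$, after which everything happens in the lattice $E$, using the identity $g=g\wedge 2x+(g-2x)^+$ and the solidity of the norm to convert the domination $0\le g\le w+c$ into a decomposition of the prescribed form. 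The steps all check out: $(g-2x)^+\le c$ follows from $g-2x\le c$ together with $c\ge 0$, and $0\le c'\le c$ gives $\n c'\n_E\le\n c\n_E$ because $E$ is a Banach lattice; the initial appeal to Lemma \ref{lem:bdd-interval} with $I=E$ is exactly how the paper's own proof begins. Your version is arguably more elementary, since it avoids the slightly fussy construction of the truncated vector-valued function in the paper's Step 1, and it isolates a purely lattice-theoretic fact: if $A\subseteq E^+$ is almost order bounded, then so is any set of positive elements each of which is dominated by a sum of two elements of $A+B(\e)$; the vector-valued statement is an immediate corollary. What the paper's approach buys in exchange is an explicit splitting of the functions $\phi$ themselves rather than only of their norms, which is more information than this lemma requires.
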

\begin{proof}
Fix $\e>0$. Using Lemma \ref{lem:bdd-interval} we choose $x_\e\in E^+$ such that $\n\phi\n \in [0,x_\e]+B(\frac12\e)$ for all $\phi\in F$. 

{\em Step 1} -- We claim that each $\phi\in F$ can be written as $\phi = f+g$ with $\n f\n\in [0,x_\e]$ and $g\in B(\frac12\e)$.
Indeed, we have 
$$
\phi 
%& = \one_{\{\n \phi\n\le x_\e\}}\phi + \one_{\{\n \phi\n > x_\e\}}\phi
 = \Big(\one_{\{\n \phi\n\le x_\e\}}\phi + \one_{\{\n \phi\n > x_\e\}}\frac{x_\e}{\n \phi\n} \phi\Big) + \one_{\{\n \phi\n > x_\e\}}\frac{(\n \phi\n - x_\e)}{\n \phi\n} \phi .
$$
For the first term on the right hand side 
we have
$$\Big\n\one_{\{\n \phi\n\le x_\e\}}\phi + \one_{\{\n \phi\n > x_\e\}}\frac{x_\e}{\n \phi\n} \phi\Big\n \in   [0,x_\e].$$
Writing $\n \phi\n = a+b$ with $a\in [0,x_\e]$ and $\n b\n_E<\frac12\e$, for the second term we have
$$  \Big\n \one_{\{\n \phi\n > x_\e\}}\frac{(\n \phi\n - x_\e)}{\n\phi\n} \phi\Big\n = \one_{\{\n \phi\n > x_\e\}} (a+b - x_\e)
\le \one_{\{\n \phi\n > x_\e\}} b,
$$
which 
shows that 
$$ \Big\n \one_{\{\n \phi\n > x_\e\}}\frac{(\n \phi\n - x_\e)}{\n\phi\n}\phi\Big\n_{E_X} \le \n b\n_E < \tfrac12\e.
$$
This proves the claim.

{\em Step 2} -- 
Let $\phi_1,\phi_2\in F$ be given, and write
$\phi_k = f_k+g_k$, where $\n f_k\n\in [0,x_\e]$ and $g_k\in B(\frac12\e)$ for $k=1,2$. Then
$$ 
\n \phi_1-\phi_2\n = \n f_1-f_2\n + \big(\,\n \phi_1-\phi_2\n - \n f_1-f_2\n\,\big),
$$
with 
$\n f_1-f_2\n \in [0,2x_\e] $
and
$$\big|\,\n \phi_1-\phi_2\n - \n f_1-f_2\n\,\big|
\le \n g_1-g_2\n,
$$
which shows that $\big\n \,\n\phi_1-\phi_2\n - \n f_1-f_2\n\,\big\n_E
< \e$.
\end{proof}

\begin{theorem}\label{thm:DM} 
Let ${E}$ be a Banach function
space with order continuous norm over a probability space $(\O,\P)$.
Let $X$ a Banach space. 
For a subset $F$ of $E_X$ the following assertions are equivalent:
\begin{enumerate}
\item\label{item:DM-a} 
    The set $F$ is relatively compact;
\item\label{item:DM-b} 
    The set $F$ is 
    %uniformly ${E}$-integrable, 
    uniformly tight, almost order bounded, and
    scalarly relatively compact.
%\item[(3)]\label{item:DM-c} 
%    The set $F$ is uniformly ${E}$-integrable, flatly concentrated, and
%    for all $x\s\in X\s$ the set $\lb F, x\s\rb$
%    is relatively compact in ${E}$;
\end{enumerate}\end{theorem}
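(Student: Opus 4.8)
## Proof Proposal

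The plan is to prove the two directions separately, with the forward direction $(\ref{item:DM-a})\Rightarrow(\ref{item:DM-b})$ being the easy one and the converse being where Prohorov's theorem does the work.

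\textbf{The direction $(\ref{item:DM-a})\Rightarrow(\ref{item:DM-b})$.} Assume $F$ is relatively compact in $E_X$. Almost order boundedness is immediate: a relatively compact set is totally bounded, so for $\e>0$ cover $F$ by finitely many balls $B(\phi_i,\e)$; then $\{\n\phi\n:\phi\in F\}$ is covered by $[0,\sum_i\n\phi_i\n]+B(\e)$ in $E$ (using that $\big|\,\n\phi\n-\n\phi_i\n\,\big|\le\n\phi-\phi_i\n$ pointwise). Scalar relative compactness follows because for fixed $x\s\in E\s$ the map $\phi\mapsto\lb\phi,x\s\rb$ is continuous (indeed $\n\lb\phi,x\s\rb\n_E\le\n x\s\n\,\n\phi\n_{E_X}$), hence maps relatively compact sets to relatively compact sets. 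Uniform tightness follows because $\phi\mapsto\mu_\phi$ is continuous from $E_X$ into the space of Radon probability measures with the topology of weak convergence: if $\phi_n\to\phi$ in $E_X$, then along a subsequence $\phi_n\to\phi$ a.e., and dominated-type arguments using that $F$ is almost order bounded (hence uniformly $E$-integrable by Lemma \ref{lem:ui}, hence uniformly integrable in the usual $L^1$ sense) give $\mu_{\phi_n}\to\mu_\phi$ weakly; so $\{\mu_\phi:\phi\in F\}$ is a continuous image of a relatively compact set, hence relatively weakly compact, hence uniformly tight by Prohorov.

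\textbf{The direction $(\ref{item:DM-b})\Rightarrow(\ref{item:DM-a})$.} This is the heart of the matter. Let $(\phi_n)_{n\ge1}$ be a sequence in $F$; I must extract a norm-convergent subsequence. \emph{Step 1:} By uniform tightness and Prohorov's theorem, pass to a subsequence so that $\mu_{\phi_n}\to\mu$ weakly for some Radon probability measure $\mu$ on $X$. \emph{Step 2:} Construct a candidate limit. Consider the product space and use the Skorokhod representation or, more in the spirit of this paper, use the uniform tightness directly: for each $k$ pick a compact $K_k\subseteq X$ with $\mu_{\phi_n}(K_k)\ge1-2^{-k}$ for all $n$; these force the $\phi_n$ to concentrate on a common $\sigma$-compact set, and combined with scalar relative compactness one can pin down a limit function $\phi$. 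The cleanest route: show $(\phi_n)$ is Cauchy in $E_X$ after a further subsequence. \emph{Step 3 (the key estimate):} By Lemma \ref{lem:obdd}, $F-F$ is almost order bounded, so the sequence $(\phi_m-\phi_n)_{m,n}$ lies in an almost order bounded set. By Lemma \ref{lem:alm-ord-bdd}, to conclude $\phi_m-\phi_n\to0$ in $E_X$ along a subsequence it suffices to show $\lb\n\phi_m-\phi_n\n,x\s\rb\to0$ for a single strictly positive $x\s\in E\s$ (such an $x\s$ exists since $E$ has order continuous norm, hence is Dedekind $\sigma$-complete with weak order unit in its dual). \emph{Step 4:} Establish that scalar convergence: fix a strictly positive $x\s$; one shows $\int_\O\n\phi_m-\phi_n\n\,x\s\,d\P\to0$. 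Here uniform $E$-integrability of $F-F$ (Lemma \ref{lem:ui}) reduces matters to controlling $\int_\O (\n\phi_m-\phi_n\n\wedge r)\,x\s\,d\P$ for fixed $r$; uniform tightness lets one further restrict the integral to a compact set $K\subseteq X$ up to small error; and on the compact set one uses that the joint distributions $(\phi_m,\phi_n)$ are uniformly tight in $X\times X$ (Prohorov again, after a diagonal subsequence) together with scalar relative compactness to force the mass near the diagonal, making $\int\n\phi_m-\phi_n\n\wedge r\,d\P$ small.

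\textbf{Main obstacle.} The delicate point is Step 4: turning weak convergence of the one-dimensional distributions $\mu_{\phi_n}$ plus scalar relative compactness into control of the \emph{joint} behavior $\n\phi_m-\phi_n\n$, i.e., showing the pair $(\phi_m,\phi_n)$ puts its mass near the diagonal of $X\times X$. Scalar relative compactness gives, for each $x\s\in X\s$, convergence of $\lb\phi_n,x\s\rb$ in $E$ along a subsequence; a diagonal argument over a countable norming set of functionals, combined with the uniform tightness of the joint laws, should upgrade this to the required diagonal concentration. Getting this interplay right — and in particular handling the passage from weak convergence of measures to the almost-everywhere / norm statements in $E_X$ via the identification results from \cite{MN} cited above — is where the real work lies; everything else is bookkeeping using Lemmas \ref{lem:ui}, \ref{lem:alm-ord-bdd}, and \ref{lem:obdd}.
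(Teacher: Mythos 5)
Your forward direction and the scaffolding of the converse (uniform tightness of the differences, almost order boundedness of $F-F$ via Lemma \ref{lem:obdd}, reduction to a single strictly positive functional via Lemma \ref{lem:alm-ord-bdd}, truncation via the uniform $E$-integrability supplied by Lemma \ref{lem:ui}) all match the paper's proof. But the step you yourself flag as ``where the real work lies'' --- showing that the distributions $\nu_{j,k}$ of $\phi_{n_j}-\phi_{n_k}$ converge weakly to $\delta_0$, which is what makes $\int_\O\big(\n\phi_{n_j}-\phi_{n_k}\n\wedge r\big)\,d\P\to 0$ --- is left as a gesture (``the joint laws are uniformly tight and scalar relative compactness should force the mass near the diagonal''), and this is precisely the heart of the theorem. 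The paper closes this gap as follows: reduce to separable $X$, fix a countable family $(x_m\s)$ that is weak$\s$-dense in every ball of $X\s$, and use scalar relative compactness to pass to a subsequence along which $\lb\phi_{n_j},x_m\s\rb$ converges in $E$ and almost surely for every $m$. Then any subsequential weak limit $\nu$ of the (uniformly tight, by your Step 1) family $\nu_{j,k}$ satisfies $\widehat{\nu}(x_m\s)=1$ for all $m$ by dominated convergence; since balls of $X\s$ are weak$\s$-metrizable and characteristic functionals of Radon measures are weak$\s$-sequentially continuous, $\widehat{\nu}\equiv 1$, so $\nu=\delta_0$ by uniqueness of the Fourier transform. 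Without some version of this argument (Fourier transforms, or equivalently the observation that $(x_m\s)$ separates points so that $\nu$ is carried by $\bigcap_m\ker x_m\s=\{0\}$), your Step 4 is an assertion, not a proof.

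A secondary quibble: the existence of a strictly positive functional on $E$ does not follow from order continuity of the norm alone, as you claim; the paper obtains it from the measure-space structure, taking a $\P$-a.e.\ strictly positive $\psi_0$ in the associate space $E'$ after normalizing $E$ to be saturated. In the present setting this is harmless, but the justification you give is not the right one.
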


As has been mentioned above, every separable Banach function space has order continuous norm.

\begin{proof}
Without loss of generality we may assume that $E$ is {\em saturated}, i.e., 
that $f\equiv 0$ on $A$ for all $f\in E$ implies $\P(A)=0$ \cite[Section 67]{Za}.  

(\ref{item:DM-a})$\Rightarrow$(\ref{item:DM-b}): \   
It is clear that the relative compactness of $F$ implies its almost order boundedness  and scalar relative compactness. 

%To see that $F$ is uniformly ${E}$-integrable, let  $\e>0$ be fixed and define,
%for $r\ge 0$, the sets $V_{r}$ in $E_X$ by 
%$$V_{r} := \big\{\phi\in E_X: \ \n \one_{\{\n\phi\n>r\}} \phi\n_{E_X} 
%< \e\big\}.
%$$
%By Lemmas \ref{open} and the observation preceding Lemma \ref{K-K}, the sets $V_r$ 
%are open in $E_X$ and cover $K$. Since also
%$V_r \subseteq V_{r'}$ whenever $r\le r'$, 
%the compactness of $\overline{K}$ implies
%that $K\subseteq V_{r_0}$ for some
%$r_0\ge 0$. Then $K\subseteq V_r$ for all $r\ge r_0$ and the uniform
%${E}$-integrability follows.

To prove the uniform tightness of $F$,
by Prohorov's theorem it suffices to show that every sequence 
$(\phi_n)_{n\ge 1}$ in $F$ has a subsequence $(\phi_{n_j})_{j\ge 1}$ 
whose distributions converge weakly.

Let us write $\mu_n :=
\mu_{\phi_n}$ for simplicity. 
%Since $F$ is relatively compact in $E_X$, 
%it is a separable subset of 
%$E_X$, and therefore $F$ is contained in $E_{X_0}$ for some separable closed
%subspace $X_0 \subseteq X$. Thus, 
%Prohorov's
%theorem applies and it suffices to show that $(\mu_{n})_{n\ge 1}$ has
%a weakly convergent subsequence $(\mu_{n_k})_{k\ge 1}$.
Since $F$ is compact we may assume, by passing to a subsequence, that $(\phi_n)_{n\ge 1}$ converges in $E_{X}$ to an element
$\phi\in E_{X}$. By passing to a further subsequence we may
also assume that the convergence takes place almost surely.  
Let $\mu: =\mu_\phi$ be the distribution of $\phi$. 
Then for all $f\in C_b(X)$ we have, by dominated convergence, 
$$
\limn \int_{X} f\, d\mu_n 
= \limn \int_\O f\circ \phi_n \, d\P 
= \int_\O f\circ \phi \, d\P = \int_{X} f\, d\mu.
$$ 

(\ref{item:DM-b})$\Rightarrow$(\ref{item:DM-a}): \
Let $(\phi_n)_{n\ge 1}$ be a sequence in $F$. We shall prove that
some subsequence $(\phi_{n_j})_{j\ge 1}$ converges in $E_X$.

{\em Step 1} -- Let $\nu_{n,m}$ denote distribution of the random
variable $\phi_{n}-\phi_{m}$. We claim that the family $(\nu_{n,m})_{n,m\ge 1}$ 
is uniformly tight. The proof is standard and runs as follows. 
Fix some $\e>0$. Since
$(\mu_n)_{n\ge 1}$ is uniformly tight we may choose a compact
set $K\subseteq X$ such that $\mu_n(K)\ge 1-\e$ for all $n\ge 1$. The
set  $L = \{x-y:\ x,y\in K\}$ is compact as well, being the image of
the compact set $K\times K$ under the continuous map 
$(x,y)\mapsto x-y$. Noting that  $\phi_{n}(\o)\in K$ and
$\phi_{m}(\o)\in K$ implies  $\phi_{n}(\o)-\phi_m(\o)\in L$, 
the claim now follows from
$$\bal
\nu_{n,m}(L) 
& \ge \P\{\phi_{n}\in K, \ \phi_{m}\in K\}\\ 
&\ge 1 - \bigl(\P\{\phi_{n}\in \complement K\} + \P\{\phi_{m}\in \complement K\} \bigr)
  =  1 - \bigl(\mu_{n}(K) + \mu_{m}(K)\bigr) \ge 1-2\e.
\eal $$

{\em Step 2} -- 
Since $F$ is uniformly tight, we may assume $X$ to
be separable. Let $(x_m\s)_{m\ge 1}$ 
be a sequence in $X\s$ whose intersection with every ball is weak$\s$-dense.
As before we let $\mu_n$ denote the distribution of $\phi_n$. 
Prohorov's theorem implies the existence of a weakly convergent
subsequence $(\mu_{n_j})_{j\ge 1}$.  By passing to a subsequence we may assume
that the limit $\psi_m:=\limj \lb \phi_{n_j},x_m\s\rb$ exists in ${E}$ 
for all $m$ and that the convergence happens almost surely. 

We claim that $\nu_{n_j,n_k} \to
\delta_0$ weakly as $j,k \to \infty$, where $\delta_0$ denotes the Dirac measure concentrated at $0$. 
Let $j_l\to \infty$ and $k_l\to\infty$.
By Step 1 we may pass to a subsequence of the indices $l$
and assume that $\nu_{n_{j_l},n_{k_l}}\to \nu$ for some Radon probability
measure $\nu$ on $X$.
By taking Fourier transforms,
from the almost sure convergence $\lim_{l\to\infty} \lb \phi_{n_{j_l}},x_m\s\rb 
= \lim_{l\to\infty} \lb \phi_{n_{k_l}},x_m\s\rb = \psi_m$
we see that for all $m$,
$$\widehat{\nu}(x_m\s) = \lim_{l\to\infty}
\widehat{\nu_{n_{j_l},n_{k_l}}}(x_m\s) = 
\lim_{l\to\infty}\int_\Om 
\exp(-i \dual{\phi_{n_{j_l}}-\phi_{n_{k_l}}}{x_m^*}) d\P 
= 1 = \widehat{\delta_0}(x_m\s)
$$
by dominated convergence.
Noting that the weak$\s$-topology of every ball in $X\s$ is metrizable,
combined with the fact that the Fourier transforms of Radon 
probability measures
are weak$\s$-sequentially continuous,
it follows that $\widehat{\nu} = \widehat{\delta_0}$. Therefore
$\nu=\d_0$ by the uniqueness of the Fourier transform. 
Since the sequences $j_l$ and $k_l$ were arbitrary, this proves the
claim.

{\em Step 3} -- 
It remains to show that the sequence $(\phi_{n_j})_{j\ge 1}$ is Cauchy in $E_X$.

For $j,k\ge 1$ define the functions $g_{jk}\in E$ by 
$$g_{jk}:= \n \phi_{n_j}- \phi_{n_k}\n.$$
For $n\ge 1$ choose $r_n\ge 0$ so large that
$$ \n \one_{\{g_{jk} > r_n\}}g_{jk}\n_E < \tfrac1n \ \ \hbox{for all $j,k\ge 1$.}$$
This is possible since $F-F$ is almost order bounded by Lemma \ref{lem:obdd}. 
By Lemma \ref{lem:ui}, $\n F-F\n$ is uniformly $E$-integrable.

Let $f\in C_b(\R)$ be arbitrary. By Step 2 and Prohorov's theorem,
%, applied to the functions $f(\n\cdot\n)$, we know that
$$\lim_{j,k\to\infty} \int_{\O} f\circ g_{jk}\,d\P =f(0).$$
Keeping $n\ge 1$ fixed for the moment and taking $f(t) = |t|\wedge r_n$, it follows that there exists an index $N_n\ge 1$ such that
$$ \int_{\O} g_{jk}\wedge r_n \,d\P < \tfrac1n \ \ \hbox{for all $j,k\ge N_n$.}$$
Let $0\le \psi_0\le \one$ be a  $\P$-almost everywhere strictly positive 
function belonging to the associate space $E'$, which is defined as the space  of all
$\nu$-measurable functions $\psi$ on $S$ such that
$$ \n \psi\n_{E'} := \sup_{\n \phi\n_E \le 1} \int_\O |\phi \psi|\,d\P < \infty.$$
Such a function exists since $E$ is assumed to be saturated.
Note that $\psi_0$ 
is strictly positive as element of $E\s$.
For $j,k\ge N_n$,
$$
\bal 
0\le \lb g_{jk},\psi_0\rb 
&\le \lb g_{jk}\wedge r_n,\psi_0\rb + \lb \one_{\{g_{jk} > r_n\}} g_{jk},\psi_0\rb
< \tfrac1n (1+ \n \psi_0\n_{E'}).
\eal
$$
It follows that 
$\lim_{j,k\to\infty} \lb g_{jk},\psi_0 \rb = 0$.
%Since by assumption we have $\one \in E$, the uniform $E$-integrability
%of $F-F$ implies its almost order boundedness. 
Now Lemma \ref{lem:alm-ord-bdd} shows that
$\lim_{j,k\to\infty} g_{jk}=0$ in $E$.
\end{proof}

As in \cite{DM}, the uniform tightness assumption in assertion (2) 
may be replaced by flat concentration. This follows from Prohorov's 
theorem in combination with the well known result of 
de Acosta \cite{Ac}, see also \cite[Theorem I.3.7]{VTC}, 
that a family $M$ of Radon probability measures on $X$ is 
uniformly tight if and only if $M$ is flatly concentrated and 
for all $x\s\in E\s$ the set of image measures
$\lb M,x\s\rb = \{\lb \mu,x\s\rb: \ x\s\in E\s\}$ is uniformly tight.


\begin{thebibliography}{9}

\bibitem{Ac} {\sc A. de Acosta},
Existence and convergence of probability measures in Banach spaces,
{\em Trans. Am. Math. Soc.\/} {\bf 152} (1970), 273--298.

\bibitem{Au} {\sc J.-P. Aubin}, Un th\'eor\`eme de compacit\'e, 
{\em C. R. Acad. Sci. Paris\/} {\bf  256} (1963), 5042--5044.

\bibitem{BGJ} {\sc E.J. Balder, M. Girardi, and V. Jalby}, 
From weak to strong types of ${L}\sb E\sp 1$-convergence by the Bocce criterion, {\em Studia Math.\/}  {\bf 111}  (1994),  241--262. 

%\bibitem{Bo} {\sc V.I. Bogachev}, ``Gaussian Measures'', 
%Math. Surveys and Monographs, Vol. 62, Amer. Math. Soc., 1998.

\bibitem{Br} {\sc J.K. Brooks and N. Dinculeanu}, 
Conditional expectations and weak and strong compactness in spaces of Bochner integrable functions, {\em J. Multivariate Anal.\/} {\bf 9} (1979), 420--427.

\bibitem{DM} {\sc S. Diaz and F. Mayoral}, 
On compactness in spaces of Bochner integrable functions,
{\em Acta Mat. Hungar.\/} {\bf 83} (1999), 231--239.

\bibitem{DRS} {\sc J. Diestel, W.M. Ruess, W. Schachermayer}, On weak compactness
in $L^1(\mu,X)$, {\em Proc. Amer. Math. Soc.\/} {\bf 118} (1993),
447--453. 

\bibitem{DS} {\sc N. Dunford and J.T. Schwartz}, ``Linear Operators, Part I'',
Pure and Applied Mathematics, Vol. 7, Interscience Publishers, Inc., New York, 1958.

%\bibitem{Ev} {\sc L.C.  Evans},  ``Partial differential equations'', 
%Graduate Studies in Mathematics, Vol. 19. Amer. Math. Soc., Providence, RI, %1998. 

\bibitem{G} {\sc M. Girardi}, Weak versus norm compactness in $L_1$: 
the Bocce criterion, {\em Studia Math.\/} {\bf 98} (1991), 95--97. 

%\bibitem{Kal} {\sc O. Kallenberg},
%``Foundations of Modern Probability'', Second edition, Probability and its
%Applications, Springer-Verlag, New York, 2002.

%\bibitem{KPS} S.G. Krein, Yu. I. Petunin, and E.M. Semenov, 
%``Interpolation of Linear Operators'', Translations of
%Mathematical Monographs, Vol. 54, Amer. Math. Soc., Providence, 
%R.I., 1982.

%\bibitem{LZ} {\sc W.A.J. Luxemburg and A.C. Zaanen}, 
%``Riesz Spaces'', Vol. I, North-Holland, 1971.

\bibitem{MN} {\sc P. Meyer-Nieberg}, ``Banach Lattices'', Universitext, Springer-Verlag, 1991.

\bibitem{Rab} {\sc F. R\"abiger}, Stability and ergodicity of dominated semigroups.  II. The strong case,
{\em Math. Ann.\/} {\bf 297} (1993), 103--116.

\bibitem{RS} {\sc R. Rossi and G. Savar\'e},
Tightness, integral equicontinuity and compactness for evolution problems in Banach spaces,
{\em Ann. Sc. Norm. Super. Pisa Cl. Sci.\/} {\bf 2} (2003), 395--431.

\bibitem{Si} {\sc J. Simon}, Compact sets in the space $L\sp p(0,T;B)$, 
{\em Ann. Mat. Pura Appl.\/} {\bf 146} (1987), 65--96.

\bibitem{VTC} {\sc N.N. Vakhania, V.I. Tarieladze and S.A. Chobanyan}, 
    ``Probability Distributions on Banach Spaces'', 
    D. Reidel Publishing Company, Dordrecht-Boston-Lancaster-Tokyo, 1987.

\bibitem{Za} {\sc A.C. Zaanen}, ``Integration'', North Holland, 
Amsterdam, 1967.

\end{thebibliography}
\end{document}